\newtheorem{theorem}{Theorem}[section]
\newtheorem{lemma}[theorem]{Lemma}
\newtheorem{assumption}{Assumption}
\begin{document}

\title{Scalable Mixed-Integer Optimization with Neural Constraints via Dual Decomposition}

\author{\IEEEauthorblockN{
Shuli Zeng\IEEEauthorrefmark{1},
Sijia Zhang\IEEEauthorrefmark{1},
Feng Wu\IEEEauthorrefmark{1},
Shaojie Tang\IEEEauthorrefmark{2},
Xiangyang Li\IEEEauthorrefmark{1}}
\IEEEauthorblockA{\IEEEauthorrefmark{1}\textit{University of Science and Technology of China}, Hefei, China\\
\texttt{zengshuli0130@mail.ustc.edu.cn}, \texttt{sxzsj@mail.ustc.edu.cn},\\
\texttt{wufeng02@ustc.edu.cn}, \texttt{xiangyangli@ustc.edu.cn}}
\IEEEauthorblockA{\IEEEauthorrefmark{2}\textit{State University of New York at Buffalo}, Buffalo, NY, USA\\
\texttt{shaojiet@buffalo.edu}}
\thanks{Corresponding authors: Sijia Zhang, Feng Wu, and Xiangyang Li.}
}

\maketitle

\begingroup
\renewcommand\thefootnote{}\footnotetext{%
\textbf{Authors’ extended version.} Accepted to AAAI-26 (Main Technical Track, Oral).
Please cite the conference version. Full appendices and additional experiments are included here.}
\addtocounter{footnote}{-1}
\endgroup

\begin{abstract}
Embedding deep neural networks (NNs) into mixed-integer programs (MIPs) is attractive for decision making with learned constraints, yet state-of-the-art “monolithic” linearisations blow up in size and quickly become intractable. In this paper, we introduce a novel dual-decomposition framework that relaxes the single coupling equality $u=x$ with an augmented Lagrange multiplier and splits the problem into a vanilla MIP and a constrained NN block. Each part is tackled by the solver that suits it best—branch \& cut for the MIP subproblem, first-order optimisation for the NN subproblem—so the model remains modular, the number of integer variables never grows with network depth, and the per-iteration cost scales only linearly with the NN size. On the public \textsc{SurrogateLIB} benchmark, our method proves \textbf{scalable}, \textbf{modular}, and \textbf{adaptable}: it runs \(120\times\) faster than an exact Big–\(M\) formulation on the largest test case; the NN sub-solver can be swapped from a log-barrier interior step to a projected-gradient routine with no code changes and identical objective value; and swapping the MLP for an LSTM backbone still completes the full optimisation in \(47\)s without any bespoke adaptation.
\end{abstract}


\section{Introduction}
\label{sec:introduction}

Intelligent decision systems increasingly integrate neural networks into decision-making and optimization pipelines~\cite{bengio2021machine,cappart2021combining,joshi2020machine}. Neural networks provide data–driven surrogates that capture complex, nonlinear dependencies, and their outputs can be enforced as constraints inside mixed-integer programming (MIP) models while preserving the latter’s combinatorial guarantees (Fig.~\ref{fig:nn-embedded-mip}). This fusion marries the predictive accuracy of data-driven models with the exactness of discrete optimisation: the NN supplies rich, sample-efficient knowledge of latent physics or economics~\cite{bertsimas2016analytics,misaghian2022assessment,jalving2023beyond}, while the MIP layer delivers globally optimal and certifiably feasible decisions.  As a result, practitioners can exploit learned structure without sacrificing safety guarantees or the ability to produce worst-case certificates—an advantage unattainable by black-box heuristics~\cite{droste2006upper} alone.

Prior research~\cite{ceccon2022omlt,bergman2022janos,lombardi2017empirical,maragno2025mixed} has taken important steps toward incorporating neural networks as mathematical constraints. One line of work focuses on \textbf{exact, monolithic integration}: encoding the network’s operations (e.g., ReLU activations) directly into a MIP or satisfiability formulation.  \citet{tjeng2017evaluating} formulate the verification of piecewise-linear neural networks as a mixed-integer linear program, enabling exact adversarial robustness checks with orders-of-magnitude speedups over earlier methods. Building on such encodings, \citet{anderson2020strong} developed stronger MILP formulations for ReLU-based networks, which further tighten the linear relaxations and improve solving efficiency. In parallel, practical frameworks have emerged to automate these integrations: for example, PySCIPOpt-ML \cite{turner2023pyscipopt} provides an interface to embed trained machine learning models (including neural nets) as constraints within the open-source solver SCIP~\cite{bestuzheva2023enabling}. These advances demonstrate the viability of treating a neural network as a combinatorial constraint, allowing exact optimization or formal verification involving network outputs.

\begin{figure}[t]
  \centering
  \includegraphics[width=0.47\textwidth]{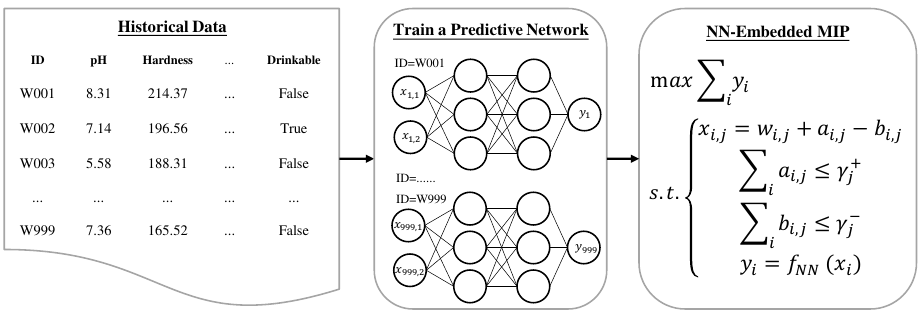}
  \caption{Workflow of the \textbf{NN‑Embedded MIP}: historical samples are adjusted under budget constraints, evaluated by a neural classifier, and optimised via a mixed‑integer solver.}
  \label{fig:nn-embedded-mip}
\end{figure}

However, existing approaches to neural network integration face fundamental limitations. A prevailing tactic is to \emph{fully linearise} every activation, yielding a single, rigid MILP. Any change to the network then forces a complete reformulation; the encoding relies on large Big–$M$ bounds~\cite{grimstad2019relu,huchette2023deep,badilla2023computational} that weaken relaxations~\cite{griva2008linear,camm1990cutting}; and runtime soars with depth and width. In our largest test, translating a $64{\times}512$ ReLU model for 500 samples with \textsc{PySCIPOpt-ML} inflates the problem from 500 variables to 5,900,000 variables and 7,800,000  constraints, yet SCIP still times out after two hours. Even the tighter formulations of \citet{anderson2020strong} remain highly size-sensitive. These limitations motivate a framework that \textbf{decouples} the neural and combinatorial parts, achieving \textbf{scalable}, \textbf{modular}, and \textbf{adaptable} integration without loss of optimality.

We propose a \textbf{dual-decomposition} scheme that duplicates the integer vector \(x\) with a continuous copy \(u\), relaxes the coupling equality \(u=x\) via an augmented-Lagrange term, and then alternates between two native solves:  
(i) a branch-and-cut MIP on \(x\) and (ii) a constrained first-order update on \(u\).  
After each pair of solves the dual multipliers are updated, shrinking the mismatch until the two blocks agree; because neither block is ever reformulated into the language of the other, the method keeps the full power of specialised MILP engines and modern NN optimisers while sidestepping the Big-\(M\) explosion. 
This method yields three validated advantages.  
\textbf{Scalability}: On the X-Large case (6×128 ReLU, 500 samples) our dual-decomposition solver finishes in 8.7s, whereas \textsc{PySCIPOpt-ML} needs 900s or more—a \(100\times\) speed-up obtained.  
\textbf{Modularity}: replacing the NN sub-solver (projected-gradient $\leftrightarrow$ log-barrier) leaves the MIP untouched and changes total time by less than a factor of two.  
\textbf{Adaptability}: swapping the backbone MLP for a CNN or LSTM requires no reformulation and still converges in under 47s with the same objective value.  
Section~\ref{sec:experiments} details these results, confirming that dual decomposition can combine learning surrogates with discrete optimisation at scale.

Our main contributions are as follows:
\begin{itemize}
\item \textbf{Algorithm.} We introduce a dual-decomposition framework that couples a mixed-integer subproblem with a neural subproblem through an augmented-Lagrange split, avoiding any Big–\(M\) reformulation.
\item \textbf{Theory.} We proof global convergence of the alternating scheme and prove that the per-iteration cost grows only linearly with network size, guaranteeing scalability by design.
\item \textbf{Experiments.} Extensive experiments confirm that the solver is \emph{scalable} (up to \(100\times\) faster than monolithic baselines), \emph{modular} (sub-solvers can be swapped without code changes), and \emph{adaptable} (CNN and LSTM backbones solve in \(<47\) s with identical optima).
\end{itemize}

\section{Preliminaries}

\paragraph{Mixed Integer Linear Programming (MILP).}
Given a set of decision variables $\bm{x}\in \mathbb{R}^n$, the MILP problem is formulated as follow:
\begin{equation}
\label{equ:milp}
\begin{aligned}
    \min \quad &\bm{c}^{\top} \bm{x}, \\
    \mathrm{s.t.} \quad &\bm{A} \bm{x} \geq \bm{b}, \quad \bm{l} \leq \bm{x} \leq \bm{u}, \\
    &\bm{x} \in \{0,1\}^p \times \mathbb{Z}^q \times \mathbb{R}^{n-p-q},
\end{aligned}
\end{equation} 
where $\bm{c} \in \mathbb{R}^n$ is the objective coefficients, $\bm{A} \in \mathbb{R}^{m \times n}$ is the constraint coefficient matrix, $\bm{b} \in \mathbb{R}^m$ is the right-hand side vector, $\bm{l},\bm{u} \in \mathbb{R}^n$ are the variable bounds. 

\paragraph{Augmented Lagrangian Method (ALM).} 
\label{subsec:augLag}
We consider the equality‑constrained problem as below:
\begin{align}
  \min_{x\in\mathbb{R}^{n}}\; f(x)  ~~~\text{s.t.}~~~ c_i(x)=0,\quad i\in\mathcal{E},
  \label{eq:al_prob}
\end{align}
where $\mathcal{E}$ indexes the set of equality constraints.  
A straightforward quadratic‑penalty approach enforces the constraints by minimising the following equation: 
\begin{equation}
  \Phi_k^{\text{pen}}(x)
  \;=\;
  f(x)
  + \mu_k \sum_{i\in\mathcal{E}} c_i(x)^2,
  \label{eq:penalty}
\end{equation}
and gradually driving $\mu_k\!\to\!\infty$; however, large penalties typically induce severe ill‑conditioning~\cite{bertsekas2014constrained}.

To mitigate this, the augmented Lagrangian augments the objective with a linear multiplier term as:
\begin{equation}
  \Phi_k(x)
  \;=\;
  f(x)
  + \tfrac{\mu_k}{2}\sum_{i\in\mathcal{E}} c_i(x)^2
  + \sum_{i\in\mathcal{E}} \lambda_i\,c_i(x),
  \label{eq:augLag}
\end{equation}
where $\lambda\in\mathbb{R}^{|\mathcal{E}|}$ estimates the true Lagrange multipliers.  
At outer iteration $k$, one solves:
\[
  x^{(k)} \;=\; \arg\min_{x}\Phi_k(x),
\]
then updates the multipliers by:
\begin{equation}
  \lambda_i^{(k+1)}
  \;=\;
  \lambda_i^{(k)} + \mu_k\,c_i\!\bigl(x^{(k)}\bigr),
  \qquad i\in\mathcal{E},
  \label{eq:lambdaUpdate}
\end{equation}
and increases $\mu_k$ (e.g.\ $\mu_{k+1}=\beta\mu_k$ with $\beta>1$) only when constraint violations stagnate.  
Because the linear term offsets part of the quadratic penalty, $\mu_k$ can stay moderate, leading to better numerical conditioning and stronger convergence guarantees. In practice, safeguards that bound $\lambda$ are often used to prevent numerical overflow~\cite{hestenes1969multiplier}.

\section{Problem Formulation and Motivation}
\label{sec:motivation}

We study mixed-integer programmes (MIPs) that \emph{embed} a pretrained neural network (NN) directly into their objective and constraints. Let \(x\!\in\!\mathbb{Z}^{p}\) denote the discrete decisions and \(y:=f_{\theta}(x)\!\in\!\mathbb{R}^{q}\) the NN output;  
\(f_{\theta}\) is differentiable with fixed parameters \(\theta\).  
The general problem is
\begin{equation}
\label{eq:general-nn-mip}
\begin{aligned}
\min_{x\in\mathbb{Z}^{p}}\quad 
    & c^{\top}x + d^{\top}y \\[2pt]
\text{s.t.}\quad 
    & A\!\begin{bmatrix}x\\ y\end{bmatrix} \le b, \\[4pt]
    & y = f_{\theta}(x).
\end{aligned}
\end{equation}
with  
\(c\!\in\!\mathbb{R}^{p}\) and \(d\!\in\!\mathbb{R}^{q}\) weighting the discrete cost and the NN-dependent cost,  
\(A\!\in\!\mathbb{R}^{m\times(p+q)}\) and \(b\!\in\!\mathbb{R}^{m}\) capturing all affine limits that jointly constrain the decisions and the NN prediction.  
Because the mapping \(x \mapsto y=f_{\theta}(x)\) is nonlinear, problem~\eqref{eq:general-nn-mip} inherits both the combinatorial hardness of MIPs and the nonlinearity of deep networks, making it particularly challenging to solve.

\paragraph{Limitations of Network-to-MIP Embedding.}
A common strategy for incorporating a pretrained NN into a MIP is to replace each nonlinear operation by piecewise-linear surrogates—e.g.\ Big-M formulations~\cite{cococcioni2021big} or SOS1 constraints~\cite{fischer2018branch}.  Although exact, these encodings introduce \(\mathcal{O}(Lh)\) additional binaries and constraints for an \(L\)-layer, \(h\)-unit MLP, which can blow up the solver’s search tree even for moderate network sizes.  Moreover, available linearization routines support only basic primitives (dense ReLUs, simple max-pooling), whereas modern architectures—convolutional filters, recurrent gates, attention blocks and layer-normalisation—have no off-the-shelf encodings.  Adapting each novel layer requires bespoke linearization, increasing modelling complexity and hindering extensibility. 


\paragraph{Rationale for Decomposition.}
By contrast, treating the NN as a black-box oracle avoids this overhead but sacrifices explicit feasibility guarantees, risking violations of critical constraints in safety-sensitive applications. To overcome the computational challenges associated with monolithic linearization approaches, we propose employing \emph{augmented Lagrangian decomposition}. This technique partitions the original problem~\eqref{eq:general-nn-mip} into simpler subproblems, each handled by specialized solvers, while iteratively maintaining solution consistency through dual updates. This decomposition approach provides significant computational advantages, improved scalability, and robust theoretical convergence properties.
\section{Our Decomposition-Based Method}
We introduce an auxiliary continuous variable $u \in \mathbb{R}^p$ to duplicate the integer variable $x$, resulting in an equivalent but more tractable formulation featuring explicit coupling constraints:
\begin{equation}
\label{eq:decomposed-formulation}
\begin{aligned}
  \min_{x,u}\quad
      & c^{\top}x + d^{\top}f_{\theta}(u) \\[4pt]
  \text{s.t.}\quad
      & A_{\text{MIP}}\,x \;\le\; b_{\text{MIP}}, \\[2pt]
      & A_{\text{NN}}
        \begin{bmatrix}
          u \\[2pt] f_{\theta}(u)
        \end{bmatrix}
        \;\le\; b_{\text{NN}}, \\[2pt]
      & u \;=\; x, 
        \qquad x \in \mathbb{Z}^{p}.
\end{aligned}
\end{equation}

Here $A_{\text{MIP}}\!\in\!\mathbb{R}^{m_1\times p},\; b_{\text{MIP}}\!\in\!\mathbb{R}^{m_1}$ store the “legacy’’ linear limits that involve only the integer vector $x$.  
All remaining affine constraints that depend on the NN output are written as  
$A_{\text{NN}}\!\in\!\mathbb{R}^{m_2\times(p+q)}$ and $b_{\text{NN}}\!\in\!\mathbb{R}^{m_2}$ acting on the stacked vector $\bigl[u;\,f_{\theta}(u)\bigr]$.  
Across iterations $u$ stays continuous—enabling inexpensive first-order updates of the NN block—while the coupling equation $u=x$ is enforced at convergence to recover a valid integer solution.

\subsection{High-Level Decomposition Approach}
The key theoretical challenge we address is the nonconvex equality constraint $u=x$. We handle this complexity by introducing dual multipliers (Lagrange multipliers) $\lambda\in\mathbb{R}^p$ and a quadratic penalty term controlled by parameter$\rho>0$. The resulting augmented Lagrangian function is given by:
\begin{equation}
\label{eq:aug-lag}
\begin{aligned}
  \mathcal{L}_{\rho}(x,u,\lambda)
  \;=\;
      &\, c^{\top}x
      + d^{\top}f_{\theta}(u) \\[2pt]
      &+\, \lambda^{\top}(u - x)
      + \frac{\rho}{2}\,\|u - x\|_{2}^{2} .
\end{aligned}
\end{equation}
The term $\lambda^{\top}(u-x)$ is the \emph{classical} Lagrange coupling: at a Karush-Kuhn-Tucker(KKT) point~\cite{gordon2012karush} it drives the equality $u=x$ by adjusting the dual multipliers, while supplying first-order information to each subproblem.  
The quadratic add-on $\tfrac{\rho}{2}\|u-x\|_{2}^{2}$ augments the Lagrangian, injecting strong convexity that damps oscillations and markedly accelerates the alternating updates observed in practice.  
The penalty weight $\rho>0$ balances two competing objectives: (i)~allowing the MIP and NN blocks enough flexibility to explore their individual feasible regions (small~$\rho$), and (ii)~tightening the coupling to secure rapid primal feasibility (large~$\rho$).  
Following standard augmented-Lagrangian heuristics~\cite{larsson2004augmented}, we initialize $\rho$ conservatively and increase it adaptively whenever the primal residual $\|u-x\|_2$ ceases to decrease, thereby achieving both robustness and efficiency.
Figure~\ref{fig:dual-decomposition} illustrates our proposed dual decomposition workflow, highlighting the iterative, coordinated solving of the decomposed subproblems. Algorithm \ref{alg:dual_decomp} provides the pseudocode.

\begin{figure}[t]
    \centering
    \includegraphics[width=0.9\linewidth]{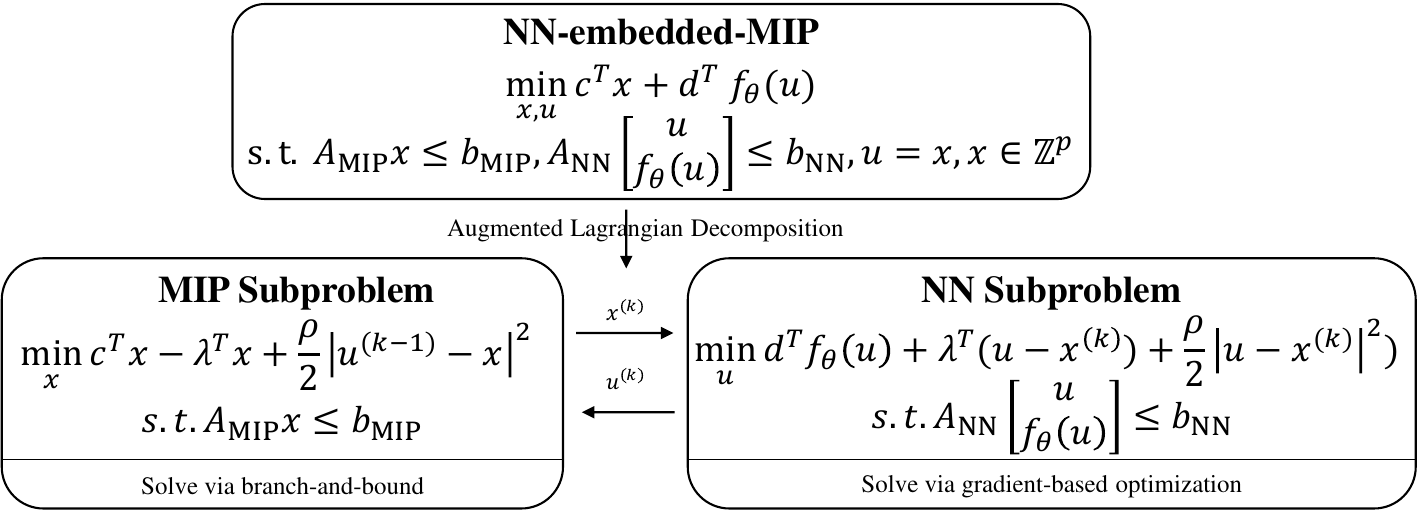}
    \caption{Illustration of the proposed NN-embedded MIP solution framework using augmented Lagrangian decomposition, emphasizing iterative primal-dual coordination.}
    \label{fig:dual-decomposition}
\end{figure}

\begin{algorithm}[htbp]
\caption{Dual Decomposition for NN-Embedded MIP}
\label{alg:dual_decomp}
\begin{algorithmic}[1]
\Require Initialization $u^{(0)}, \lambda^{(0)}, \rho>0$, tolerance $\varepsilon$, maximum iterations $K$
\For{$k=1,\dots,K$}
    \State Solve MIP subproblem for $x^{(k)}$ to update discrete decisions.
    \State Solve NN subproblem (PGD or Log-Barrier) for $u^{(k)}$ to refine continuous inputs.
    \State Update dual multiplier: $\lambda^{(k)} \leftarrow \lambda^{(k-1)} + \rho(u^{(k)}-x^{(k)})$.
    \State Check convergence: if $\|u^{(k)} - x^{(k)}\|_\infty < \varepsilon$, terminate.
    \State Adjust penalty parameter $\rho$ adaptively based on residuals.
\EndFor
\State \Return optimal solution pair $(x^{\star},u^{\star})$.
\end{algorithmic}
\end{algorithm}

\subsection{Detailed Modular Subproblem Decomposition}
We then decompose the augmented Lagrangian into two tractable subproblems solved alternately and coordinated by iterative updates of the dual multipliers:

\paragraph{MIP Subproblem.}
Given $(u^{(k-1)},\lambda^{(k-1)})$, the MIP subproblem is defined as:
\begin{equation}
\label{eq:mip-update}
\begin{split}
  x^{(k)}
  \;=\; \arg\min_{x \in \mathbb{Z}^{p}} 
  & \;\Bigl(
       c^{\top}x
       - \lambda^{(k-1)\!\top} x
       + \frac{\rho}{2}\,\|u^{(k-1)} - x\|_{2}^{2}
     \Bigr) \\
  & \text{s.t.}\quad
  A_{\text{MIP}}\,x \;\le\; b_{\text{MIP}} .
\end{split}
\end{equation}
This formulation retains the linear constraints $A_{\text{MIP}}\,x \;\le\; b_{\text{MIP}}$ but adjusts the objective to include the penalty term $\|u^{(k-1)} - x\|_2^2$, thereby encouraging proximity between $x$ and the current NN solution $u^{(k-1)}$. It includes the linear term $-\lambda^{(k-1)\!\top}x$, the Lagrange multiplier contribution originating from the coupling constraint $u=x$ in the augmented Lagrangian. The companion constant $\lambda^{(k-1)\!\top}u^{(k-1)}$ is omitted because it is independent of $x$ and thus irrelevant for the minimisation. 
Modern branch-and-bound or cutting-plane solvers~\cite{bestuzheva2021scip,gurobi} can efficiently handle this subproblem due to its convexified structure~\cite{mistry2021mixed}.

\paragraph{NN Subproblem.}
Given the discrete solution $x^{(k)}$ and multiplier $\lambda^{(k-1)}$, the continuous variables are optimized through solving:
\begin{equation}
\label{eq:nn-update}
\begin{split}
  u^{(k)}
  \;=\; \arg\min_{u \in \mathbb{R}^{p}}
     \Bigl\{ \;
       & d^{\top}f_{\theta}(u) 
       +\,\lambda^{(k-1)\!\top}\bigl(u - x^{(k)}\bigr) \\[2pt]
       & +\,\dfrac{\rho}{2}\,\|u - x^{(k)}\|_{2}^{2}
     \;\Bigr\} \\[6pt]
  \text{s.t.}\quad
    & A_{\text{NN}}
      \begin{bmatrix}
        u \\[2pt] f_{\theta}(u)
      \end{bmatrix}
      \;\le\; b_{\text{NN}} .
\end{split}
\end{equation}
Fixing the discrete iterate $x^{(k)}$ isolates the continuous decision~$u$, letting us recalibrate the neural input without perturbing the combinatorial block.  
The first term $d^{\top}f_{\theta}(u)$ transmits the original cost coefficients to the NN, while the dual inner product $\lambda^{(k-1)\!\top}(u - x^{(k)})$ conveys the mismatch detected in the preceding coordination step.  
The proximal penalty $\tfrac{\rho}{2}\|u - x^{(k)}\|_2^{2}$ tempers aggressive updates, yielding a well-conditioned landscape.  
Crucially, the stacked linear constraint
$A_{\text{NN}} \left[u \; ; \; f_{\theta}(u)\right] \le b_{\text{NN}}$
preserves every safety or performance limit originally imposed on the network output, ensuring that each neural adjustment remains admissible before the next dual synchronization.

\subsection{Solving the NN Subproblem with Constraints}
The NN subproblem is non-trivial due to the nonlinear nature of
$f_{\theta}(u)$ and the constraint
$A_{\text{NN}}\!\left[\,u \; ; \; f_{\theta}(u)\right] \le b_{\text{NN}}$,
which induces the feasible set
\[
  \mathcal{C}
  \;:=\;
  \Bigl\{
    u\in\mathbb{R}^{p}
    \;\Bigm|\;
    A_{\text{NN}}
    \begin{bmatrix}
      u \\[2pt] f_{\theta}(u)
    \end{bmatrix}
    \le b_{\text{NN}}
  \Bigr\}.
\]

\paragraph{Projected Gradient Descent (PGD).}
When projection onto $\mathcal{C}$ is affordable—e.g., if $A_{\text{NN}}$ encodes simple box or monotone limits—we perform
\begin{enumerate}
\item a gradient step on the smooth part of the objective, then
\item an orthogonal projection back onto $\mathcal{C}$.
\end{enumerate}
Let $J_{f_{\theta}}(u)$ denote the Jacobian of $f_{\theta}$ at~$u$.  
The gradient of the augmented Lagrangian with fixed $(x^{(k)},\lambda^{(k-1)})$ reads
\[
\nabla_{u}\mathcal{L}_{\rho}(u)
  = J_{f_{\theta}}(u)^{\!\top}d
    + \lambda^{(k-1)}
    + \rho\bigl(u - x^{(k)}\bigr).
\]
With stepsize $\eta>0$ we update
\[
u \leftarrow \operatorname{Proj}_{\mathcal{C}}
       \!\bigl(u - \eta\nabla_{u}\mathcal{L}_{\rho}(u)\bigr),
\]
where $\operatorname{Proj}_{\mathcal{C}}$ is the Euclidean projection~\cite{liu2009efficient} onto~$\mathcal{C}$. 
The proximal term $\rho\|u-x^{(k)}\|_2^{2}$ ensures the landscape is well-conditioned, and PGD typically converges in a handful of iterations~\cite{haji2021comparison}.

\paragraph{Log-Barrier Interior Method.}
If projection is itself expensive (e.g.\ $\mathcal{C}$ is defined by many affine cuts on $f_{\theta}$), we instead absorb the constraints via a logarithmic barrier:
\[
\begin{aligned}
\min_{u \in \mathbb{R}^{p}}\;\Bigl\{\,%
    & d^{\top} f_{\theta}(u) +\, \lambda^{(k-1)\!\top}\!\bigl(u - x^{(k)}\bigr) +\, \dfrac{\rho}{2}\,\|u - x^{(k)}\|_{2}^{2} \\[4pt]
    &-\; \mu
       \sum_{j=1}^{m}
       \log\!\Bigl(
         b_{\text{NN},j}
         - a_{j}^{\top}
           \begin{bmatrix}
             u \\[2pt] f_{\theta}(u)
           \end{bmatrix}
       \Bigr)
\,\Bigr\}
\end{aligned}
\]
where $a_{j}^{\top}$ is the $j$-th row of $A_{\text{NN}}$ and $\mu>0$ is the barrier parameter.  
Starting from a strictly feasible point, we solve the above with a second-order method and gradually decrease $\mu$ ($\mu\!\leftarrow\!0.1\mu$) until the dual feasibility tolerance is reached.  
This interior-point strategy dispenses with explicit projections yet retains strict feasibility along the entire Newton path \cite{toussaint2017tutorial}; it is especially beneficial when $\mathcal{C}$ is described by many coupled linear cuts but occupies a relatively low-volume region, where projection costs dominate.

\subsection{Dual Updates and Penalty Parameter Adjustments}
The dual multipliers are iteratively updated to maintain primal-dual feasibility:
\begin{equation}
\lambda^{(k)} \leftarrow \lambda^{(k-1)} + \rho(u^{(k)} - x^{(k)}).
\end{equation}
The penalty parameter $\rho$ is dynamically adjusted to effectively balance primal feasibility and dual optimization stability:
\begin{itemize}
\item \textbf{Increasing} $\rho$ enhances primal feasibility enforcement when constraint violations persist.
\item \textbf{Decreasing} $\rho$ accelerates convergence by mitigating slow dual updates or numerical instability.
\end{itemize}

\section{Theoretical Analysis}
In this section, we provide two theoretical perspectives on our proposed decomposition framework: (i) a convergence analysis showing that our primal-dual scheme yields globally convergent iterates under standard assumptions; and (ii) a scalability analysis demonstrating that the computational complexity of our method grows only linearly with the neural network size, ensuring robust performance as problem instances scale.

\subsection{Convergence Analysis}
We prove that Algorithm~\ref{alg:dual_decomp} converges to a KKT point of the augmented Lagrangian.
\begin{assumption}
\label{ass:regularity}
\begin{enumerate}
\item $\mathcal{X}:=\{x\in\mathbb{Z}^{p}\mid A_{\text{MIP}}x\le b_{\text{MIP}}\}$ is non-empty and bounded;  
\item The map $g(u):=d^{\top}f_{\theta}(u)$ is convex and has $L$-Lipschitz gradient ($L$–smooth);
\item $\mathcal{C}:=\{u\mid A_{\text{NN}}[u;f_{\theta}(u)]\le b_{\text{NN}}\}$ is non-empty and satisfies Slater’s condition;  
\item A penalty $\rho>L$ is chosen once (or increased finitely many times and then fixed).
\end{enumerate}
\end{assumption}

\begin{theorem}[Convergence to a KKT point]
\label{thm:stationary}
Under Assumption~\ref{ass:regularity}, the sequence $\{(x^{(k)},u^{(k)},\lambda^{(k)})\}$ generated by Algorithm~\ref{alg:dual_decomp} is bounded. Every accumulation point $(x^{\star},u^{\star},\lambda^{\star})$ satisfies:
\begin{equation}
\label{eq:kkt-conditions}
\begin{aligned}
  & u^{\star} \;=\; x^{\star}, \\[4pt]
  & \nabla_{u}\mathcal{L}_{\rho}\!\bigl(x^{\star},u^{\star},\lambda^{\star}\bigr)
    \;=\; 0, \\[4pt]
  & x^{\star} \in
    \arg\min_{x \in \mathcal{X}}
      \mathcal{L}_{\rho}\!\bigl(x,u^{\star},\lambda^{\star}\bigr).
\end{aligned}
\end{equation}
Moreover,
\begin{equation}
\label{eq:lagrangian-rate}
\begin{aligned}
  \min_{t \le k}\;\|u^{(t)} - x^{(t)}\|_{2}^{2}
    &= \mathcal{O}\!\bigl(\tfrac{1}{k}\bigr), \\[4pt]
  \mathcal{L}_{\rho}\!\bigl(x^{(k)},u^{(k)},\lambda^{(k)}\bigr)
    - \mathcal{L}_{\rho}\!\bigl(x^{\star},u^{\star},\lambda^{\star}\bigr)
    &= \mathcal{O}\!\bigl(\tfrac{1}{k}\bigr).
\end{aligned}
\end{equation}
\end{theorem}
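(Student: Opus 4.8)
The plan is to treat Algorithm~\ref{alg:dual_decomp} as a block-coordinate augmented-Lagrangian (ADMM-type) scheme and to use the augmented Lagrangian itself, $V_k := \mathcal{L}_{\rho}(x^{(k)},u^{(k)},\lambda^{(k)})$, as a Lyapunov/potential function. The whole argument rests on a \emph{sufficient-decrease} inequality $V_{k-1}-V_k \ge \gamma\,\|u^{(k)}-u^{(k-1)}\|_2^2$ for some $\gamma>0$, paired with a uniform lower bound on $V_k$; together these force summability of the successive differences, from which primal feasibility, boundedness, KKT stationarity, and the $\mathcal{O}(1/k)$ rates all follow. I would account for the three updates separately. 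The exact $x$-minimisation over $\mathcal{X}$ gives $\mathcal{L}_{\rho}(x^{(k)},u^{(k-1)},\lambda^{(k-1)}) \le \mathcal{L}_{\rho}(x^{(k-1)},u^{(k-1)},\lambda^{(k-1)})$ with no convexity required, which is exactly where the finiteness of the bounded integer set $\mathcal{X}$ from Assumption~\ref{ass:regularity}(1) legitimises solving the block globally. The $u$-update decreases $\mathcal{L}_{\rho}$ by at least $\tfrac{\rho}{2}\|u^{(k)}-u^{(k-1)}\|_2^2$, since $g(u)=d^{\top}f_{\theta}(u)$ is convex and the proximal term $\tfrac{\rho}{2}\|u-x^{(k)}\|_2^2$ renders the $u$-objective $\rho$-strongly convex over the feasible set $\mathcal{C}$. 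Finally the dual step raises $\mathcal{L}_{\rho}$ by exactly $\tfrac{1}{\rho}\|\lambda^{(k)}-\lambda^{(k-1)}\|_2^2$.

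The crux is to dominate this dual ascent by the $u$-descent. Writing the first-order optimality condition of the $u$-subproblem as $0 \in \nabla g(u^{(k)}) + \lambda^{(k-1)} + \rho(u^{(k)}-x^{(k)}) + N_{\mathcal{C}}(u^{(k)})$ and substituting the dual update $\lambda^{(k)}=\lambda^{(k-1)}+\rho(u^{(k)}-x^{(k)})$ yields $\lambda^{(k)} = -\nabla g(u^{(k)}) - s^{(k)}$ with $s^{(k)}\in N_{\mathcal{C}}(u^{(k)})$. In the unconstrained or inactive case $s^{(k)}=0$, and $L$-smoothness of $g$ immediately gives $\|\lambda^{(k)}-\lambda^{(k-1)}\|_2 \le L\,\|u^{(k)}-u^{(k-1)}\|_2$, so $V_{k-1}-V_k \ge (\tfrac{\rho}{2}-\tfrac{L^2}{\rho})\|u^{(k)}-u^{(k-1)}\|_2^2$, which is strictly positive for $\rho$ large enough. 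I expect the \textbf{main obstacle} to be controlling the normal-cone increment $s^{(k)}-s^{(k-1)}$ when the affine limits on $f_{\theta}(u)$ are active: here I would invoke Slater's condition (Assumption~\ref{ass:regularity}(3)) to guarantee bounded, well-behaved multipliers for $\mathcal{C}$ and to argue that their increments are themselves dominated by $\|u^{(k)}-u^{(k-1)}\|_2$, preserving the sufficient-decrease inequality with a possibly smaller $\gamma>0$.

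Next I would establish a uniform lower bound on $V_k$. Substituting $\lambda^{(k)}=-\nabla g(u^{(k)})-s^{(k)}$ into $V_k$ and applying the descent lemma for the $L$-smooth $g$ gives $V_k \ge c^{\top}x^{(k)} + g(x^{(k)}) + \tfrac{\rho-L}{2}\|u^{(k)}-x^{(k)}\|_2^2$; since $\rho>L$ by Assumption~\ref{ass:regularity}(4) the quadratic term is nonnegative and $c^{\top}x^{(k)}+g(x^{(k)})$ is bounded below because $x^{(k)}$ lies in the bounded set $\mathcal{X}$ and $g$ is continuous, which is precisely where the threshold $\rho>L$ enters. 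Telescoping the sufficient-decrease inequality against this lower bound forces $\sum_k \|u^{(k)}-u^{(k-1)}\|_2^2 < \infty$, hence $\|u^{(k)}-u^{(k-1)}\|_2\to 0$ and, through the dual bound, $\|u^{(k)}-x^{(k)}\|_2 = \tfrac{1}{\rho}\|\lambda^{(k)}-\lambda^{(k-1)}\|_2 \to 0$, establishing $u^{\star}=x^{\star}$ at every accumulation point. Boundedness of $\{x^{(k)}\}$ is immediate, of $\{u^{(k)}\}$ follows since $\|u^{(k)}-x^{(k)}\|_2\to 0$, and of $\{\lambda^{(k)}\}$ from $\lambda^{(k)}=-\nabla g(u^{(k)})-s^{(k)}$ with $\nabla g$ bounded on bounded sets and $s^{(k)}$ bounded by Slater. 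Passing to the limit along a convergent subsequence, the $x$-minimisation and closedness of $\mathcal{X}$ give $x^{\star}\in\arg\min_{x\in\mathcal{X}}\mathcal{L}_{\rho}(x,u^{\star},\lambda^{\star})$, while the $u$-optimality condition passes to the limit to give the stationarity condition for $u^{\star}$, which reduces to $\nabla_{u}\mathcal{L}_{\rho}(x^{\star},u^{\star},\lambda^{\star})=0$ when no constraint of $\mathcal{C}$ is active at $u^{\star}$.

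Finally, the rates fall out of the same telescoped inequality: $\gamma\sum_{t=1}^{k}\|u^{(t)}-u^{(t-1)}\|_2^2 \le V_0 - \inf_k V_k$ gives $\min_{t\le k}\|u^{(t)}-u^{(t-1)}\|_2^2 = \mathcal{O}(1/k)$, and since $\|u^{(t)}-x^{(t)}\|_2 \le \tfrac{L}{\rho}\|u^{(t)}-u^{(t-1)}\|_2$, the best-iterate primal residual obeys $\min_{t\le k}\|u^{(t)}-x^{(t)}\|_2^2 = \mathcal{O}(1/k)$. For the Lagrangian gap I would combine the monotone decrease of $V_k$ with the lower bound to obtain $V_k - \mathcal{L}_{\rho}(x^{\star},u^{\star},\lambda^{\star})\to 0$ and extract the $\mathcal{O}(1/k)$ estimate from the summed decrements; this is the most delicate of the rate claims, and it is where I would be most careful to avoid over-stating what monotone convergence of the potential alone provides.
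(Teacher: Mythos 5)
Your route is genuinely different from the paper's, and in one important respect more careful. The paper's per-iteration decrease lemma claims that the dual step \emph{decreases} the merit $\mathcal{M}_{\rho}$ by $\tfrac{\rho}{2}\|u^{(k)}-x^{(k)}\|_2^2$; a direct computation shows that the dual ascent in fact \emph{increases} the augmented Lagrangian by $\rho\|u^{(k)}-x^{(k)}\|_2^2=\tfrac{1}{\rho}\|\lambda^{(k)}-\lambda^{(k-1)}\|_2^2$, which is exactly the term you isolate and then dominate by the $u$-descent through the optimality condition $\lambda^{(k)}=-\nabla g(u^{(k)})-s^{(k)}$ and the $L$-smoothness of $g$. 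That is the standard nonconvex-ADMM Lyapunov argument, and it is the correct way to obtain the sufficient-decrease inequality that the paper asserts with the wrong sign. Your lower bound on $V_k$ via the descent lemma also makes explicit where the threshold $\rho>L$ of Assumption~\ref{ass:regularity}(4) actually enters, which the paper leaves implicit.

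Two points in your plan still need work, and you have honestly flagged both. First, when constraints of $\mathcal{C}$ are active, the increment $s^{(k)}-s^{(k-1)}$ of the normal-cone elements is not controlled by $\|u^{(k)}-u^{(k-1)}\|_2$ in general: Slater's condition yields boundedness of the associated multipliers but no Lipschitz-type bound on their increments, so the key inequality $\|\lambda^{(k)}-\lambda^{(k-1)}\|_2\le L\,\|u^{(k)}-u^{(k-1)}\|_2$ genuinely fails in the active case and some additional structure (e.g., iterates remaining in the interior of $\mathcal{C}$, or an error-bound argument exploiting that the rows of $A_{\text{NN}}$ are affine) is required; the paper avoids this only by never engaging with $\mathcal{C}$ in its descent lemma. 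Second, the $\mathcal{O}(1/k)$ rate for the Lagrangian gap does not follow from monotone decrease plus a lower bound: that gives $V_k\downarrow V_\infty$, and the tail $V_k-V_\infty=\sum_{t>k}(V_{t-1}-V_t)$ of a convergent series need not be $\mathcal{O}(1/k)$. The paper's proof has the same gap (it derives only $\sum_k\|u^{(k)}-x^{(k)}\|_2^2<\infty$ and then cites the full rate display), so you are no worse off, but neither argument as written establishes the second line of the rate claim.
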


\begin{proof}[Proof]
The argument proceeds in three steps. (i) Descent in the $u$–block. Because $g(u)=d^{\top}f_{\theta}(u)$ is $L$–smooth and $\rho>L$, the 
function $g(u)+\tfrac{\rho}{2}\|u-x\|_{2}^{2}$ is 
$(\rho-L)$-strongly convex in $u$.  Solving the NN subproblem exactly therefore yields a strict decrease in the augmented merit and a bound of order $\|u^{(k)}-x^{(k-1)}\|_{2}^{2}$.  (ii) Descent in the $x$–block. The discrete set $\mathcal{X}$ is finite and each MIP subproblem is solved to global optimality, so minimisation over $x$ never increases the augmented Lagrangian. (iii) Dual update. Updating $\lambda^{(k+1)}=\lambda^{(k)}+\rho\bigl(u^{(k)}-x^{(k)}\bigr)$ produces the telescoping identity 
\(
\mathcal{M}_{\rho}^{(k+1)}=
\mathcal{M}_{\rho}^{(k)}-\tfrac{\rho-L}{2}\|u^{(k)}-x^{(k)}\|_{2}^{2},
\)
so $\sum_{k}\|u^{(k)}-x^{(k)}\|_{2}^{2}<\infty$ and $\min_{t\le k}\|u^{(t)}-x^{(t)}\|_{2}^{2}=\mathcal{O}(1/k)$.  The merit is bounded below, hence the iterate sequence is bounded; by Bolzano–Weierstrass it has accumulation points, and the first-order optimality of both exact subsolvers implies every such point satisfies the KKT system. Full details are deferred to Appendix. \qedhere
\end{proof}

\subsection{Scalability with Neural Network Size}
\label{subsec:scale}
Next, we analyze how the complexity of our method scales with the neural network architecture.

\begin{theorem}[Linear Scalability in Network Size]
\label{thm:scalability}
Let $P$ denote the number of parameters in the neural network $f_{\theta}$. Then each iteration of Algorithm~\ref{alg:dual_decomp} has overall complexity
\begin{equation}
\mathcal{O}\bigl(T_{\text{MIP}} + P\bigr),
\end{equation}
where $T_{\text{MIP}}$ is the complexity of solving the MIP subproblem, independent of $P$. Consequently, for fixed MIP size, the runtime per iteration grows \emph{linearly} with the network size.
\end{theorem}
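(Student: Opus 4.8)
The plan is to dissect a single outer iteration of Algorithm~\ref{alg:dual_decomp} into its four constituent operations---the MIP solve, the NN solve, the dual update, and the penalty/convergence bookkeeping---and to bound the cost of each either by a quantity that never references $f_{\theta}$ (hence independent of $P$) or by a constant number of forward/backward passes through the network (hence $\mathcal{O}(P)$). Summing the two categories then yields $\mathcal{O}(T_{\text{MIP}}+P)$.

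First I would isolate the MIP subproblem~\eqref{eq:mip-update}. Its objective $c^{\top}x-\lambda^{(k-1)\top}x+\tfrac{\rho}{2}\|u^{(k-1)}-x\|_{2}^{2}$ and its feasible set $A_{\text{MIP}}x\le b_{\text{MIP}}$ contain no occurrence of $f_{\theta}$; the network enters the coupled formulation only through $u$, which is frozen here. Hence the branch-and-cut cost is exactly $T_{\text{MIP}}$, a function of the MIP data $(c,\lambda,\rho,u^{(k-1)},A_{\text{MIP}},b_{\text{MIP}})$ alone and provably independent of $P$. Likewise the dual update $\lambda^{(k)}\leftarrow\lambda^{(k-1)}+\rho(u^{(k)}-x^{(k)})$, the residual test $\|u^{(k)}-x^{(k)}\|_{\infty}<\varepsilon$, and the $\rho$-adjustment are all $\mathcal{O}(p)$ vector operations, again untouched by the network.

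The crux is the NN subproblem. Here I would invoke the cheap-gradient principle of reverse-mode automatic differentiation: one forward evaluation of $f_{\theta}(u)$ costs $\Theta(P)$, and the vector--Jacobian product $J_{f_{\theta}}(u)^{\top}v$ for any fixed cotangent $v$ is returned by a single backward pass whose cost is within a constant factor of the forward pass, i.e.\ $\mathcal{O}(P)$. The PGD gradient $\nabla_{u}\mathcal{L}_{\rho}=J_{f_{\theta}}(u)^{\top}d+\lambda^{(k-1)}+\rho(u-x^{(k)})$ is therefore assembled from one such product plus $\mathcal{O}(p)$ elementary arithmetic, and the projection onto $\mathcal{C}$ is by hypothesis a cheap box/monotone operation independent of $P$. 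For the log-barrier variant the key observation is that the barrier gradient does \emph{not} incur $m_{2}$ separate backprops: writing $a_{j}=[a_{j}^{(u)};a_{j}^{(y)}]$ and slacks $s_{j}=b_{\text{NN},j}-a_{j}^{\top}[u;f_{\theta}(u)]$, one has
\[
  \nabla_{u}\!\Big(\!-\mu\!\sum_{j=1}^{m_{2}}\!\log s_{j}\Big)
  = \mu\sum_{j=1}^{m_{2}}\frac{a_{j}^{(u)}}{s_{j}}
    + \mu\,J_{f_{\theta}}(u)^{\top}\!\Big(\sum_{j=1}^{m_{2}}\frac{a_{j}^{(y)}}{s_{j}}\Big),
\]
so after forming the aggregate cotangent $w=\sum_{j}(a_{j}^{(y)}/s_{j})\in\mathbb{R}^{q}$ in $\mathcal{O}(m_{2}q)$ work, the entire network-dependent part collapses to the \emph{single} product $J_{f_{\theta}}(u)^{\top}w$ of cost $\mathcal{O}(P)$; the slacks themselves need one forward pass plus $\mathcal{O}(m_{2}(p+q))$ matrix--vector work. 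Newton steps add Hessian--vector products, each again $\mathcal{O}(P)$ by second-order AD, with the accompanying $p\times p$ linear algebra costing $\mathcal{O}(p^{3})$, independent of $P$.

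It remains to bound the number of inner iterations, and this is the step I expect to be the main obstacle. Because Assumption~\ref{ass:regularity}(ii,iv) make $g(u)+\tfrac{\rho}{2}\|u-x^{(k)}\|_{2}^{2}$ both $(\rho-L)$-strongly convex and $(\rho+L)$-smooth, the inner solver enjoys a linear rate whose contraction factor depends only on the condition number $(\rho+L)/(\rho-L)$ and not on $P$; reaching a fixed inner tolerance thus takes $\mathcal{O}(1)$ steps as $P$ varies. Multiplying this constant by the $\mathcal{O}(P)$ per-step cost and adding the $P$-free MIP, dual, and bookkeeping terms gives the claimed $\mathcal{O}(T_{\text{MIP}}+P)$. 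The delicate point one must make explicit is that $L$, $\rho$, $p$, $q$, and $m_{2}$ are treated as fixed instance constants, so that neither the iteration count nor the auxiliary linear-algebra terms covertly scale with $P$; the barrier-gradient aggregation displayed above is precisely the technical device that prevents a spurious $m_{2}$ factor from multiplying the backpropagation cost.
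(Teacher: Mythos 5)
Your proposal is correct and follows essentially the same decomposition as the paper's proof (Appendix B/\ref{app:scale}): the MIP block and the affine projection/barrier machinery are bounded by quantities independent of $P$, while the network-dependent work reduces to a constant number of forward passes and vector--Jacobian products, each $\Theta(P)$ by reverse-mode AD. Your two additions---aggregating the barrier cotangents into a single VJP so that no spurious $m_2$ factor multiplies the backpropagation cost, and bounding the inner iteration count via the $(\rho-L)$-strong convexity of the proximal objective---are refinements of steps the paper asserts without detail, not a different route.
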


\begin{proof}[Proof]
(i) The MIP subproblem~\eqref{eq:mip-update} depends only on $x$, $\lambda$, and $u^{(k-1)}$; its size does not increase with $P$. Thus, $T_{\text{MIP}}$ is independent of the neural architecture.  
(ii) The NN subproblem~\eqref{eq:nn-update} requires evaluating $f_{\theta}(u)$ and its Jacobian $J_{f_\theta}(u)$. For standard feedforward networks, forward and backward passes both require $\mathcal{O}(P)$ time~\cite{goodfellow2016deep}.  
(iii) The projection or barrier operations involve only affine constraints $A_{\text{NN}}$, whose size is fixed by the problem specification and independent of $P$. Therefore, the per-iteration complexity is $\mathcal{O}(T_{\text{MIP}}+P)$, implying linear scaling with the network size. 
\end{proof}

In a Big-$M$ linearisation, every hidden unit contributes a constant number of \emph{additional variables and constraints}, so the model dimension itself grows roughly linearly with network depth and width. However, the resulting MILP remains \textsc{NP}-hard; empirical and worst-case studies show that solver time rises super-linearly, often exponentially, with that dimension increase~\cite{bixby1992implementing}.  By leaving the network outside the MILP, our decomposition sidesteps this blow-up and retains per-iteration linear scaling.


\section{Experimental Evaluations}
\label{sec:experiments}

\noindent\textbf{Overview of Experiments.} We perform five complementary studies to thoroughly evaluate the proposed dual-decomposition framework: \textbf{(E1)} a head-to-head comparison with Big-M linearisation; \textbf{(E2)} a stress test that scales network depth and width; \textbf{(E3)} an ablation that removes the dual-update loop; \textbf{(E4)} a modularity study that swaps heterogeneous network architectures without changing solver logic; and \textbf{(E5)} a subsolver comparison between projected-gradient descent and a log-barrier interior method. Together, these experiments probe solution quality, runtime efficiency, robustness, architecture agnosticism, and the impact of continuous-block solver choice.\par

\subsection{Experimental Setup}
\subsubsection{Implementation Details.}
All experiments are implemented in \texttt{Python}: neural classifiers are trained with \texttt{PyTorch}~\cite{paszke2019pytorch}, while MIP subproblems are solved via \texttt{PySCIPOpt 8.0.4}~\cite{MaherMiltenbergerPedrosoRehfeldtSchwarzSerrano2016}. For the \textbf{linearisation baseline}, we use \textsc{PySCIPOpt-ML} to linearise the trained classifier via its Big-M \texttt{add\_predictor\_constr} interface. For \textbf{dual decomposition~(DD)}, we adopt an ADMM-style scheme with $\rho=10.0$, step size $\eta=0.01$, and convergence tolerance $\varepsilon=10^{-4}$. Each augmented-Lagrange iteration first solves the MIP subproblem, then refines the neural block—performing 25 Adam~\cite{kingma2014adam} steps in the PGD variant or an L-BFGS-B~\cite{zhu1997algorithm} update in the log-barrier variant. Experiments~E1--E4 solve the neural subproblem with the PGD variant. We allow at most 50 iterations and enforce a per-instance time budget of $300\,\text{s}$. Every configuration is executed under five random seeds $\{42,123,456,789,1024\}$ and we report averages. All experiments are conducted on a single machine equipped with eight NVIDIA GeForce RTX 4090 GPUs and two AMD EPYC 7763 CPUs.

\subsubsection{Dataset.}
\textbf{(1) Water Potability.}  Used in Experiments~E1--E4. The dataset offers ne water-quality features with binary drinkability labels.From the undrinkable subset we draw instances of size $n\in\{25,50,100,200,400\}$. Each optimisation task selects feature adjustments---subject to per-feature budgets of $\pm2.0$---to maximise the number of samples classified as potable. 
\textbf{(2) Tree Planting~\cite{wood2023tree}.} Employed exclusively in Experiment~E5 for the subsolver comparison.  Four species placed on an \(n_{\mathrm{grid}}\times n_{\mathrm{grid}}\) plot (\(n_{\mathrm{grid}}=6\)).  Seven site features feed neural survival models. Dual decomposition (PGD and log-barrier) is compared to a Big-M baseline.

\begin{figure}[t]
  \centering
  \includegraphics[width=0.45\textwidth]{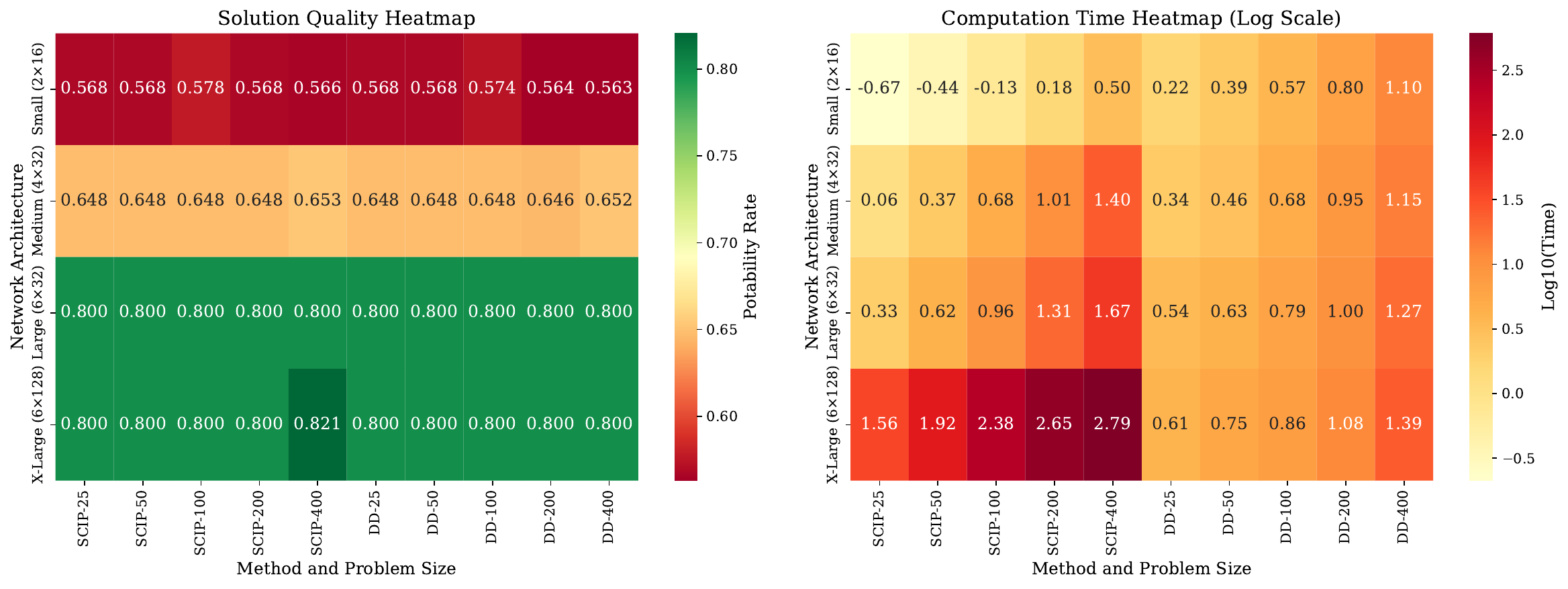}
  \caption{(E1) \textbf{Comparison with Linearisation-based Methods.} Solution quality (left) and computation time (right) across network architectures and problem sizes. Colours denote potability rate and $\log_{10}$ wall-clock time, respectively.}
  \label{fig:heatmap}
\end{figure}

\begin{figure}[t]
  \centering
  \includegraphics[width=0.45\textwidth]{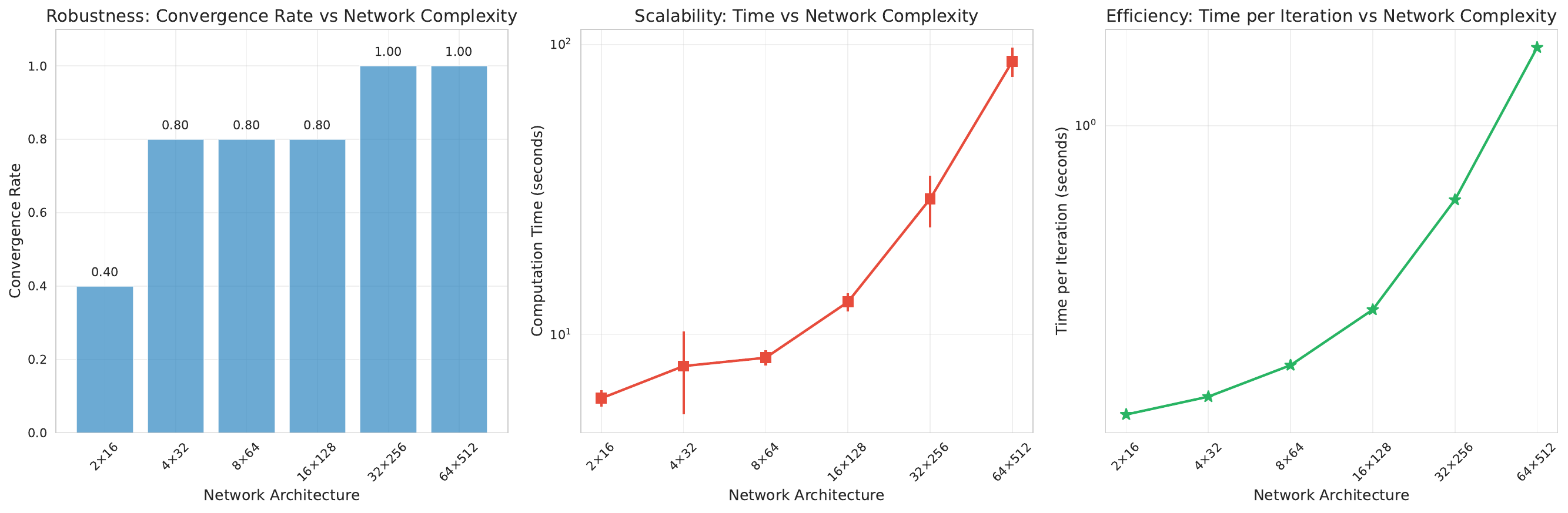}
  \caption{(E2) \textbf{Scalability stress test} with fixed $n=100$. \textbf{Left}: convergence rate; \textbf{centre}: total computation time (log scale); \textbf{right}: average time per iteration (log scale).}
  \label{fig:stress-test}
\end{figure}

\subsection{(E1) Comparison with Linearisation-based Methods}

We benchmark DD against the Big-M linearisation in \textsc{PySCIPOpt-ML} on four configurations—\emph{Small} (2×16), \emph{Medium} (4×32), \emph{Large} (6×64), and \emph{X-Large} (6×128)—and five instance sizes.

\noindent\textbf{Solution Quality.} DD aligns with the monolithic baseline across all configurations (Fig.~\ref{fig:heatmap}, left), evidencing virtually no performance loss from decomposition on this task.
\textbf{Computation Time.} Big-M runtimes exceed $10^{2}\,\mathrm{s}$ as depth or $n$ grows, whereas DD stays near \(10\,\mathrm{s}\) even on the hardest cases, delivering \(10\)–\(100\times\) speed-ups (Fig.~\ref{fig:heatmap}, right).

\subsection{(E2) Scalability Stress Test Across Network Depth}

We fix the sample size at \(n=100\) and sweep six increasingly deep MLPs
\footnote{\(2{\times}16,\;4{\times}32,\;8{\times}64,\;16{\times}128,\;32{\times}256,\;64{\times}512\) hidden units.}%
, repeating each configuration over five random seeds.  For every run we record
(i)~success rate within 50 DD iterations,
(ii)~total wall-clock time, and
(iii)~average time per iteration.

\textbf{Robustness.}  Only \(40\,\%\) of the \(2{\times}16\) runs converge within the 50-iteration cap, yet every network at or beyond \(16{\times}128\) succeeds in all trials (Fig.~\ref{fig:stress-test}, left).
\textbf{Scalability.} Total solve time grows with depth but remains under \(90\,\text{s}\) even for the \(64{\times}512\) model (Fig.~\ref{fig:stress-test}, centre).
\textbf{Efficiency.} Per-iteration cost increases sub-linearly with network size (Fig.~\ref{fig:stress-test}, right), implying that the extra overhead stems chiefly from neural evaluation, not the optimisation loop. For reference, \textsc{PySCIPOpt-ML} fails beyond the \(6{\times}128\) (X-Large) model: deeper nets either exceed memory limits or time out at \(300\,\text{s}\).  DD therefore avoids the prohibitive linearisation burden through its decomposition strategy.

\subsection{(E3) Ablation Study: Impact of Dual Coordination}\label{sec:abl_no_decomp}

To quantify the contribution of the dual–update mechanism we construct a degenerate variant—\emph{single-step gradient} (\textbf{SSG})—that performs one projected-gradient step on \(u\), heuristically rounds to an integer \(x\), and terminates.\footnote{Identical to Algorithm~1 but with the dual update removed and \(\lambda=\rho=0\).}  
The design is inspired by the direct gradient-descent solver of \textsc{DiffILO}~\cite{geng2025differentiable}, which tackles NN-embedded MIPs without dual variables, and the complete SSG pseudocode is given in Appendix. 
Powers-of-two instance sizes (\(n\in\{32,64,128\}\)) are chosen for clearer logarithmic plots.

\begin{figure}[t]
  \centering
  \includegraphics[width=0.45\textwidth]{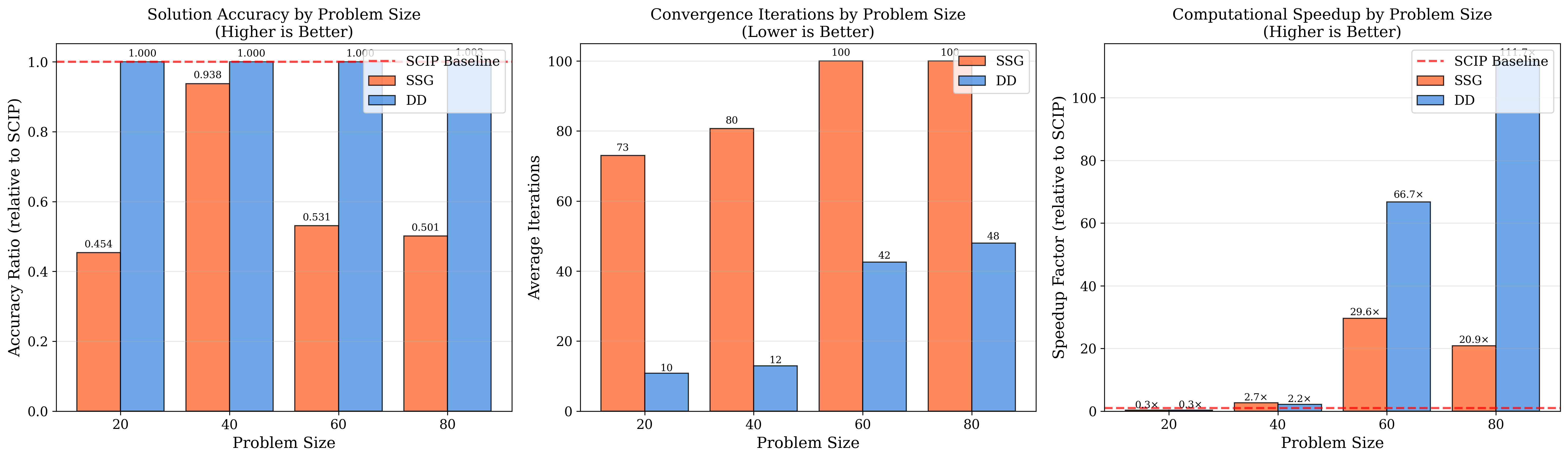}
  \caption{(E3) \textbf{Ablation results.}  SSG (no dual coordination) versus full DD.  Left: solution quality (normalised to SCIP).  Centre: \emph{log} average iterations to feasibility.  Right: speed-ups over SCIP (higher is better).}
  \label{fig:e3_ablation}
\end{figure}

\begin{itemize}
\item \textbf{Solution quality.}  
      SSG attains erratic accuracies—\(0.45\), \(0.94\), \(0.53\), and \(0.50\) of the SCIP benchmark as \(n\) grows—whereas DD stays at unity (and even \(0.998\) for \(n=80\)), confirming that the decomposition preserves optimality.

\item \textbf{Convergence.}  
      SSG requires \(73\!-\!100\) iterations (avg.\ \(\approx88\)), often stalling at the cap; DD converges in an average of \(10\), \(12\), \(42\), and \(48\) iterations for \(n=20,40,60,80\), respectively, underscoring the stabilising effect of the dual updates.

\item \textbf{Runtime.}  
      SSG is slower than SCIP for \(n=20\) (\(0.3\times\)), modestly faster for \(n=40\) (\(2.7\times\)), and reaches \(20\!-\!30\times\) speed-ups at larger scales.  
      DD starts on par with SSG at \(n=40\) (\(2.2\times\)), then widens the gap to \(66.7\times\) and \(111.7\times\) for \(n=60\) and \(80\), respectively.

\end{itemize}

Dual coordination is indispensable: removing it degrades numerical robustness, inflates iteration counts by two–three orders of magnitude, and erodes the runtime advantage as \(n\) increases.  Alternating dual updates are therefore critical for both rapid convergence and high-quality solutions.

\subsection{(E4) Adaptability Across Neural Architectures}

We run the \emph{unchanged} DD solver on four disparate predictors: a shallow fully connected network (\textbf{FC}), a lightweight \textbf{CONV1D}, a deep \textbf{RESNET}, and a sequential \textbf{LSTM}.  By contrast, linearisation frameworks such as \textsc{PySCIPOpt-ML} only support simple feed-forward layers and cannot encode CNN or LSTM backbones at all.

\begin{itemize}\itemsep2pt
  \item \textbf{Predictive fidelity.}  
        DD preserves the original accuracies: \(0.694\) (FC), \(0.743\) (CONV1D), \(0.791\) (LSTM), and \(0.696\) (RESNET), demonstrating zero distortion of the learned models.

  \item \textbf{Convergence across backbones.}  
        Within the 50-iteration limit DD satisfies \(\|u-x\|_\infty<10^{-4}\) in \(0.83\) of CONV1D runs, \(0.75\) of FC runs, \(0.83\) of RESNET runs, and \(0.50\) of LSTM runs—architectures that existing linearisation tool-chains cannot even represent.

  \item \textbf{Scalable runtime.}  
        At the largest problem size (\(n=200\)) average solve times are \(\approx6\,\text{s}\) (FC), \(\approx8\,\text{s}\) (RESNET), \(\approx15\,\text{s}\) (CONV1D), and \(\approx50\,\text{s}\) (LSTM), all well inside the \(300\,\text{s}\) wall-clock timeout where linearised formulations already fail.
\end{itemize}


\subsection{(E5) Modular Subsolver Test: PGD \textit{vs.} Log-Barrier}
\label{sec:subsolver_cmp}

We verify the plug-and-play nature of our framework by fixing all outer-loop parameters (\(\rho=10\), 50 iterations, stopping criterion) and only swapping the NN subsolver:
\textbf{DD-PGD} (25 Adam steps with projection) versus
\textbf{DD-Barrier} (damped Newton on a log-barrier with five CG steps, \(\mu\!\leftarrow\!0.1\mu\)).  
The monolithic linearisation baseline (\textsc{SCIP}) is included for reference.

Results (Fig.~\ref{fig:runtime_subsolver}) exhibit a three-tier structure: SCIP-ML escalates from 111 s to 860 s; DD-Barrier completes in 8.5s, 24.8s and 63.1s (13–16× speed-ups); DD-PGD runs in 0.4s, 1.4s and 6.5s (130–280× speed-ups over SCIP, 9–21× over Barrier). Preservation of identical classification accuracy confirms that these gaps stem purely from the continuous-solver choice. This isolated subsolver swap—without any change to the MIP block, dual updates or data handling—demonstrates true modularity: users can interchange NN optimisation routines to match constraint complexity (e.g.\ cheap projection vs.\ dense polyhedral limits) while retaining all theoretical and empirical benefits of the dual-decomposition framework.  

\subsection{Summary of Findings}
Across five studies we establish three main results.  \textbf{(i) Scalability}: DD maintains near-optimal quality while delivering \(10\!\times\!-\!280\!\times\) speed-ups over Big-M linearisation (E1, E2).   \textbf{(ii) Necessity of dual updates}: removing them degrades accuracy by up to \(55\%\) and inflates iteration counts by two orders of magnitude (E3).   \textbf{(iii) Modularity \& Adaptability}: DD functions as a plug-and-play layer—handling CNN, LSTM backbones and interchangeable PGD/Barrier sub-solvers without code changes—yet still outperforms linearisation baselines by at least an order of magnitude (E4, E5).   These empirical results corroborate the theoretical claims of Section~\ref{sec:introduction} and demonstrate that dual decomposition offers a practical path to embedding complex neural surrogates in large-scale mixed-integer optimisation.

\begin{figure}[t]
  \centering
  \includegraphics[width=0.45\textwidth]{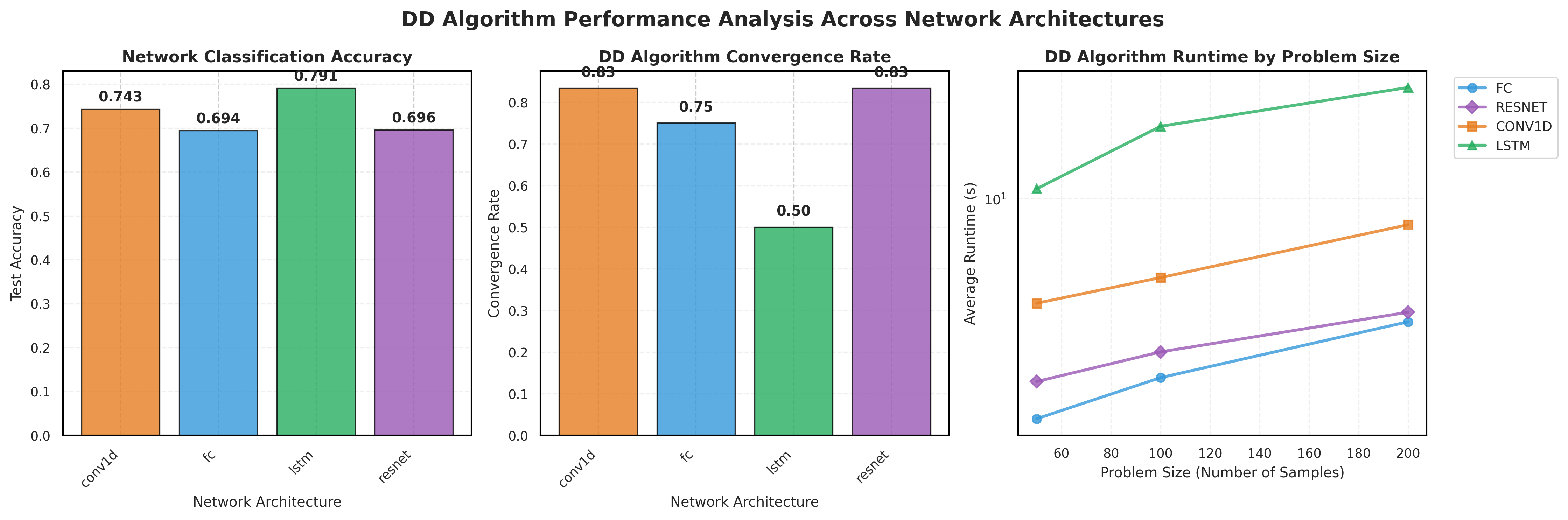}
  \caption{(E4) \textbf{Adaptability across architectures.}  
           Left: stand-alone test accuracy.  
           Centre: fraction of DD runs (five seeds) converging within the 50-iteration cap.  
           Right: geometric-mean runtime (log scale) for \(n\in\{60,100,200\}\).}
  \label{fig:modularity_arch}
\end{figure}

\begin{figure}[t]
  \centering
  \includegraphics[width=0.95\linewidth]{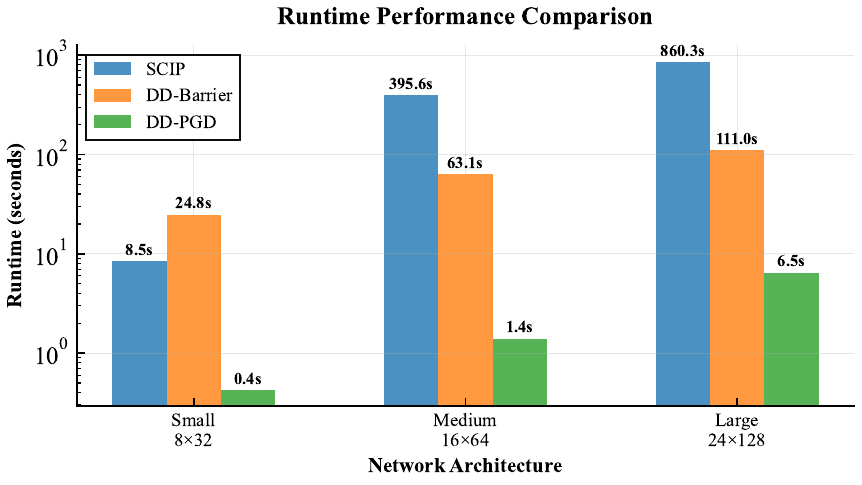}
  \vspace{-4pt}
  \caption{(E5) \textbf{Modular subsolver test.}  
           Wall–clock runtime (log scale) for DD-PGD, DD-Barrier, and the
           linearisation baseline on the \textit{Small} ($8{\times}32$),
           \textit{Medium} ($16{\times}64$), and \textit{Large}
           ($24{\times}128$) MLPs.}
  \label{fig:runtime_subsolver}
\end{figure}

\section{Conclusion}

We proposed a modular and scalable dual decomposition framework for integrating neural networks into mixed-integer optimization, enabling principled coordination between learning-based models and combinatorial solvers without requiring explicit constraint encodings. Our method outperforms monolithic baselines in both runtime and flexibility across a range of problem sizes and neural architectures. Despite its advantages, several limitations remain. First, the convergence rate can be sensitive to hyperparameters such as the penalty $\rho$ and step size, suggesting the need for adaptive or learned scheduling strategies. Second, our framework currently assumes access to gradient information from the neural network; extending to non-differentiable or black-box predictors remains an open challenge. Finally, while we focus on a single network, many real-world problems involve multiple interacting predictors or dynamic decision feedback, which could be explored via multi-block decomposition or hierarchical coordination.

\section*{Acknowledgments}
The research is partially supported by  Quantum Science and Technology-National Science and Technology Major Project (QNMP) 2021ZD0302900 and China National Natural Science Foundation with No. 62132018, 62231015, "Pioneer" and "Leading Goose" R\&D Program of Zhejiang", 2023C01029, and 2023C01143.
\bibliographystyle{IEEEtranN}
\bibliography{aaai}
\clearpage
\appendix
\subsection{Baseline Algorithms}\label{app:baselines}

\subsubsection{AutoLinearized: Exact Big--$M$ Formulation}

\paragraph{Rationale.}
AutoLinearized follows the \emph{exact embedding} paradigm: every nonlinear
activation is rewritten as a mixed-integer linear block, so the original
objective \emph{plus} the network becomes a single MILP.  A modern solver
can then certify global optimality.

\paragraph{Construction.}
For a ReLU network we introduce a continuous output variable
$z_{\ell}$ and a binary indicator $y_{\ell}$ for each hidden unit; the
ReLU $s=\max\{0,h\}$ is replaced by
\begin{align}
  &s\ge h,       & s &\le h-M_1(1-y),\\[-0.3em]
  &s\ge 0,       & s &\le M_2y, \tag{B.1}
\end{align}
where $M_1,M_2$ are interval bounds.  Constraints~(B.1) are added
layer-wise with \texttt{pyscipopt-ml.add\_predictor\_constr}.  We keep
SCIP~8.0.4 at default settings.

\paragraph{Pros \& Cons.}
The formulation is exact but may inject thousands of binaries, making
solve time explode for deep nets; nonetheless it provides a strong,
certifiable reference point.

\subsubsection{Single-Step Gradient (SSG) Heuristic}

\paragraph{Rationale.}
SSG drops integrality during optimisation: discrete variables are
relaxed to their continuous domain, smooth penalties enforce the
original constraints, and first-order updates run entirely in PyTorch.
A final rounding step returns a feasible integer solution.

\paragraph{Workflow.}
Algorithm~\ref{alg:ssg} summarises the procedure.
The relaxed vector $x$ is initialised inside its box bounds; in each
iteration we back-propagate through the network, take an Adam step on
the penalised objective, and project back to the box.  Whenever a fully
feasible iterate is encountered we record its objective value; the best
one is returned after $T$ iterations.  The method involves \emph{no}
external solver, so its cost is dominated by forward/backward passes.

\paragraph{Hyper-parameters.}
We fix $T=100$, learning-rate schedule
$\eta_t=10^{-2}\!\cdot\!(1+0.01t)^{-0.5}$, and penalty weights
$\mu_{\mathrm{lin}}=\mu_{\mathrm{nn}}=50$ in all experiments; a brief
grid search on a held-out validation set produced these values.

\begin{algorithm}[t]
\caption{Single-Step Gradient (SSG) Heuristic}\label{alg:ssg}
\begin{algorithmic}[1]
\Require initial $x^{0}$, bounds $\underline x,\overline x$,
         steps $\{\eta_t\}_{t=1}^{T}$,
         penalties $\mu_{\mathrm{lin}},\mu_{\mathrm{nn}}$
\Ensure best feasible pair $(\hat x,\hat O)$
\State $\hat O\leftarrow +\infty$, \quad $\hat x\leftarrow\varnothing$
\For{$t=1,\dots,T$}
    \State \textbf{Forward:} compute
      $p_{\text{lin}}\!=\!\max\{0,A_{\text{MIP}}x^{t-1}-b_{\text{MIP}}\}$,
      $p_{\text{nn}}\!=\!\max\{0,A_{\text{NN}}[x^{t-1};f_{\theta}(x^{t-1})]-b_{\text{NN}}\}$
    \State $\mathcal L\!\leftarrow\!
       c^{\top}x^{t-1}+d^{\top}f_{\theta}(x^{t-1})
       +\mu_{\text{lin}}\|p_{\text{lin}}\|_2^{2}
       +\mu_{\text{nn}}\|p_{\text{nn}}\|_2^{2}$
    \State \textbf{Backward:} $x^{t}\leftarrow
           \operatorname{Proj}_{[\underline x,\overline x]}
           \bigl(x^{t-1}-\eta_t\nabla_x\mathcal L\bigr)$
    \If{$p_{\text{lin}}=p_{\text{nn}}=0$ \textbf{and}
        $c^{\top}x^{t}<\hat O$}
        \State $\hat O\leftarrow c^{\top}x^{t}$,
               \quad $\hat x\leftarrow\mathrm{round}(x^{t})$
    \EndIf
\EndFor
\State \Return $(\hat x,\hat O)$
\end{algorithmic}
\end{algorithm}

\paragraph{Pros \& Cons.}
SSG is light-weight and highly parallelisable but provides no optimality
certificate and may return sub-optimal solutions if the penalty weights
are not well tuned.

\subsection{Datasets}
\subsubsection{Water Potability Benchmark}
\label{app:water-potability}
This benchmark corresponds to the \texttt{WP} problem type in \textsc{SurrogateLIB}\cite{turner2023pyscipopt}—a library of mixed-integer programs that embed machine-learning predictors as surrogate constraints.
This benchmark portrays the work of a public–health agency that must transform a collection of unsafe water samples into drinkable ones while operating under strict treatment budgets.  Each raw sample is described by a nine–dimensional feature vector
\(
w_i \in \mathbb{R}^9,\; i\in[n],
\)
where \([n]\) denotes the index set of the initially non-potable samples. 

\paragraph{Decision variables.}
For every sample \(i\) and attribute \(j\) we introduce
\begin{itemize}
    \item \(a_{i,j}\ge 0\): upward adjustment applied to attribute \(j\),
    \item \(b_{i,j}\ge 0\): downward adjustment applied to attribute \(j\),
    \item \(x_{i,j}\): value of attribute \(j\) after treatment,
    \item \(y_i\in\{0,1\}\): indicator that equals \(1\) if sample \(i\) is potable post-treatment.
\end{itemize}

\paragraph{Budgets.}
Each attribute \(j\in[9]\) has independent non-negative budgets
\(
\gamma_j^{+},\gamma_j^{-}
\)
limiting the total upward (\(a\)) and downward (\(b\)) changes across all samples.

\paragraph{Potability oracle.}
A pre-trained classifier \(f:\mathbb{R}^9\!\rightarrow\!\{0,1\}\) returns \(1\) when its input vector is judged drinkable.

\paragraph{Mixed-integer formulation.}
The optimisation problem is
\begin{equation}
\label{eq:mip-potability}
\begin{aligned}
\max_{a,b,x,y}\quad &\sum_{i=1}^{n} y_i \\[4pt]
\text{s.t.}\qquad
&x_{i,j} = w_{i,j} + a_{i,j} - b_{i,j} && \forall i\in[n],\, j\in[9], \\[2pt]
&\sum_{i=1}^{n} a_{i,j} \;\le\; \gamma_j^{+}      && \forall j\in[9], \\[2pt]
&\sum_{i=1}^{n} b_{i,j} \;\le\; \gamma_j^{-}      && \forall j\in[9], \\[2pt]
&y_i = f(x_i)                                      && \forall i\in[n], \\[2pt]
&a_{i,j},\,b_{i,j} \ge 0,\quad y_i\in\{0,1\}. &&
\end{aligned}
\end{equation}

The objective maximises the count of samples deemed potable while ensuring that the cumulative treatment applied to each attribute stays within the allotted budgets.  Data for this problem is publicly available\footnote{\url{https://github.com/MainakRepositor/Datasets/tree/master}}.

\subsubsection{Tree Planting Benchmark}
\label{app:tree-planting}

This benchmark (\texttt{TP} in \textsc{SurrogateLIB}~\cite{turner2023pyscipopt}) models the work of a forestry agency that must re-plant a cleared strip of land.  
The agency wishes to \emph{maximise the total expected number of trees that survive} while  
(i) respecting a limited budget for soil sterilisation and planting costs and  
(ii) guaranteeing a minimum expected survival target for every tree species considered.

\paragraph{Instance data.}
A grid of \(n\) candidate planting locations is generated; each location \(i\in[n]\) is described by a seven-dimensional feature vector
\(
x_i\in\mathbb{R}^7
\)
that includes soil, light and moisture indicators as well as a \emph{sterilised} flag (initially~\(0\)).  
Four tree species are available, indexed by \(k\in[4]\).  For every species \(k\) we are given
a minimum expected survival requirement \(\gamma_k\in\mathbb{R}_{\ge0}\),
a planting cost \(c_k\in\mathbb{R}_{\ge0}\),
and a trained surrogate predictor
\(
f_k : \mathbb{R}^7 \to [0,1]
\)
returning the probability that species \(k\) survives at a given location.

\vspace{4pt}
\paragraph{Decision variables.}  For each location \(i\) and species \(k\):
\begin{itemize}
    \item \(p_{i,k}\in\{0,1\}\): \(1\)~iff species \(k\) is planted at location \(i\);
    \item \(s_{i,k}\in[0,1]\): predicted survival probability of \(k\) at \(i\) (output of the surrogate);
    \item \(s'_{i,k}\in[0,1]\): survival contribution that is counted in the objective;
\end{itemize}
and for every location
\(
z_i\in\{0,1\}
\)
indicates whether its soil is sterilised.

\paragraph{Budgets and targets.}
\begin{itemize}
    \item Sterilisation budget \(\beta\in\mathbb{N}\) limits the number of sites that can be sterilised.
    \item Planting budget \(B\in\mathbb{R}_{\ge0}\) bounds the total cost of saplings.
    \item Species-wise targets \(\gamma_k\) ensure biodiversity.
\end{itemize}

\paragraph{Mixed-integer formulation.}
\begin{equation}
\label{eq:mip-tree}
\begin{aligned}
\max_{p,s,s',z}\quad
& \sum_{i=1}^{n}\sum_{k=1}^{4} s'_{i,k} \\[2pt]
\text{s.t.}\quad
& \sum_{i=1}^{n} s'_{i,k} \;\ge\; \gamma_k && \forall k\in[4] \\[2pt]
& p_{i,k}=0 \;\Longrightarrow\; s'_{i,k}=0 && \forall i\in[n],\,k\in[4] \\[2pt]
& p_{i,k}=1 \;\Longrightarrow\; s'_{i,k}\le s_{i,k} && \forall i\in[n],\,k\in[4] \\[2pt]
& \sum_{k=1}^{4} p_{i,k}=1                         && \forall i\in[n] \\[2pt]
& \sum_{i=1}^{n}\sum_{k=1}^{4} c_k p_{i,k}\;\le\; B \\[4pt]
& \sum_{i=1}^{n} z_i \;\le\; \beta \\[2pt]
& s_{i,k}=f_k\!\bigl(x_i,z_i\bigr)                && \forall i\in[n],\,k\in[4] \\[2pt]
& p_{i,k},z_i\in\{0,1\},\; s'_{i,k}\in[0,1]. &&
\end{aligned}
\end{equation}

\subsection{Convergence Proof for Theorem~\ref{thm:stationary}}
\label{app:conv}

We reproduce the augmented-Lagrangian merit
\[
  \mathcal{M}_{\rho}(x,u,\lambda)
  := g(u)+c^{\top}x
     +\frac{\rho}{2}\Bigl\|u-x+\frac{\lambda}{\rho}\Bigr\|_{2}^{2}
     -\frac{1}{2\rho}\|\lambda\|_{2}^{2},
\]
where $g(u)=d^{\top}f_{\theta}(u)$.
Under Assumption~\ref{ass:regularity} the following properties hold.

\begin{lemma}[Strong convexity of the $u$-block]
\label{lem:strong}
For $\rho>L$ the map
$u\!\mapsto\!g(u)+\tfrac{\rho}{2}\|u-x\|_2^{2}$
is $(\rho-L)$-strongly convex and $(\rho+L)$-smooth.
\end{lemma}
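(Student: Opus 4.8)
The plan is to work directly from the first-order (monotone-operator) characterizations of strong convexity and smoothness, since these require only that $g$ be differentiable and avoid any assumption that $f_\theta$ is twice differentiable. Write $h(u) := g(u) + q(u)$ with $q(u) := \tfrac{\rho}{2}\|u-x\|_2^2$, and note $\nabla q(u) = \rho(u-x)$, so that $\nabla h(u) - \nabla h(v) = \bigl(\nabla g(u) - \nabla g(v)\bigr) + \rho(u-v)$ for every $u,v$. Both claims follow by estimating this single difference of gradients.

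For the $(\rho-L)$-strong convexity I would lower-bound the inner product $\bigl(\nabla h(u)-\nabla h(v)\bigr)^{\top}(u-v)$. Expanding, the quadratic part contributes exactly $\rho\|u-v\|_2^2$, while $L$-smoothness of $g$ together with Cauchy--Schwarz gives $\bigl(\nabla g(u)-\nabla g(v)\bigr)^{\top}(u-v) \ge -L\|u-v\|_2^2$. Summing yields $\bigl(\nabla h(u)-\nabla h(v)\bigr)^{\top}(u-v) \ge (\rho-L)\|u-v\|_2^2$, which is the defining monotonicity inequality for $(\rho-L)$-strong convexity, and the hypothesis $\rho>L$ makes the modulus strictly positive. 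I would remark in passing that convexity of $g$ would upgrade the $g$-term bound to $\ge 0$ and hence give the stronger $\rho$-strong convexity; the stated $(\rho-L)$ modulus is therefore the conservative estimate that survives using $L$-smoothness alone.

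For the $(\rho+L)$-smoothness I would instead upper-bound the norm $\|\nabla h(u)-\nabla h(v)\|_2$ by the triangle inequality, controlling the $g$-part by $L\|u-v\|_2$ via $L$-smoothness and the quadratic part by $\rho\|u-v\|_2$; adding gives the Lipschitz constant $\rho+L$ for $\nabla h$.

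There is essentially no hard step here: the lemma is a routine consequence of the two characterizations, and I would not grind through the algebra in full. The only genuine decision — which I would flag rather than resolve by brute force — is the route: a Hessian argument ($\nabla^2 h = \nabla^2 g + \rho I$ with $-LI \preceq \nabla^2 g \preceq LI$) is a one-line alternative but silently presumes $f_\theta \in C^2$, whereas the monotone-operator route above needs only differentiability and so matches exactly the smoothness hypothesis actually granted in Assumption~\ref{ass:regularity}.
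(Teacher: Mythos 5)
Your proof is correct and is essentially the fleshed-out version of the paper's one-line argument (``$g$ is $L$-smooth and convex; adding $\tfrac{\rho}{2}\|u-x\|_2^2$ increases both curvature bounds by $\rho$''). The only substantive difference is the route you choose: you work with the first-order monotone-operator characterizations, bounding $\bigl(\nabla h(u)-\nabla h(v)\bigr)^{\top}(u-v)$ from below and $\|\nabla h(u)-\nabla h(v)\|_2$ from above, which requires only that $\nabla g$ exist and be $L$-Lipschitz; the paper's phrasing in terms of ``curvature bounds'' implicitly invokes the Hessian picture and hence, read literally, presumes $f_\theta\in C^2$ --- a hypothesis Assumption~\ref{ass:regularity} does not actually grant (and which fails for ReLU networks). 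Your parenthetical observation is also on target: since Assumption~\ref{ass:regularity} asserts $g$ is convex, the $g$-term satisfies $\bigl(\nabla g(u)-\nabla g(v)\bigr)^{\top}(u-v)\ge 0$ and the map is in fact $\rho$-strongly convex; the stated modulus $\rho-L$ is the weaker bound that survives even without convexity, and is what the descent argument in Lemma~\ref{lem:descent} actually consumes. So your version is marginally more general and more faithful to the regularity actually assumed, at no cost in length.
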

\begin{proof}
$g$ is $L$-smooth and convex; adding
$\tfrac{\rho}{2}\|u-x\|_2^{2}$ increases both curvature bounds by
$\rho$.  \qed
\end{proof}

\begin{lemma}[Per-iteration decrease]
\label{lem:descent}
Let $(x^{(k)},u^{(k)},\lambda^{(k)})$ be the $k$-th iterate.  
Then
\[
  \mathcal{M}_{\rho}^{(k+1)}
  \;\le\;
  \mathcal{M}_{\rho}^{(k)}
  -\frac{\rho-L}{2}\,\|u^{(k)}-x^{(k)}\|_{2}^{2}.
\]
\end{lemma}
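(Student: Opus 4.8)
The plan is to track the augmented-Lagrangian merit $\mathcal{M}_\rho$ across the three sub-updates that constitute one outer iteration—the MIP solve for $x^{(k)}$, the NN solve for $u^{(k)}$, and the dual step $\lambda^{(k)}=\lambda^{(k-1)}+\rho(u^{(k)}-x^{(k)})$—and to show that their net effect is a decrease of at least $\tfrac{\rho-L}{2}\|u^{(k)}-x^{(k)}\|_2^2$. First I would record the algebraic identity
\[
  \mathcal{M}_\rho(x,u,\lambda)
  = g(u) + c^{\top}x + \lambda^{\top}(u-x) + \tfrac{\rho}{2}\|u-x\|_2^2,
\]
i.e.\ the scaled-form merit in the appendix coincides with the augmented Lagrangian $\mathcal{L}_\rho$ of \eqref{eq:aug-lag} (the $\pm\tfrac{1}{2\rho}\|\lambda\|_2^2$ terms cancel the cross-term expansion). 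This lets me evaluate each sub-update as a change in $\mathcal{L}_\rho$ with two of its three arguments frozen, and write the one-iteration decrement as a telescoping sum over the three steps, which is the scaffold of the whole argument.

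For the two primal steps I would argue as follows. The MIP update returns a \emph{global} minimiser of $x\mapsto\mathcal{L}_\rho(x,u^{(k-1)},\lambda^{(k-1)})$ over the finite feasible set $\mathcal{X}$ (Assumption~\ref{ass:regularity}(1)), so this step cannot increase the merit; no convexity is needed because the minimisation is exact over a finite set and $x^{(k-1)}\in\mathcal{X}$ is a feasible competitor. For the NN update I would invoke Lemma~\ref{lem:strong}: since $\rho>L$, the map $u\mapsto g(u)+\tfrac{\rho}{2}\|u-x^{(k)}\|_2^2$ is $(\rho-L)$-strongly convex, and adding the linear term $\lambda^{(k-1)\top}(u-x^{(k)})$ preserves the modulus. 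The first-order optimality of the constrained minimiser $u^{(k)}$ (whose KKT multipliers exist by Slater's condition, Assumption~\ref{ass:regularity}(3)), combined with strong convexity and the feasibility $u^{(k-1)}\in\mathcal{C}$, yields the standard minimiser inequality $\mathcal{L}_\rho(x^{(k)},u^{(k)},\lambda^{(k-1)})\le\mathcal{L}_\rho(x^{(k)},u^{(k-1)},\lambda^{(k-1)})-\tfrac{\rho-L}{2}\|u^{(k)}-u^{(k-1)}\|_2^2$.

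The delicate step—and the one I expect to be the main obstacle—is the dual update, because inserting $\lambda^{(k)}=\lambda^{(k-1)}+\rho(u^{(k)}-x^{(k)})$ into $\mathcal{L}_\rho$ changes only the coupling term $\lambda^{\top}(u-x)$ and \emph{raises} the merit by exactly $\rho\|u^{(k)}-x^{(k)}\|_2^2$, pointing the wrong way. To offset this ascent I would use the stationarity relation produced by the NN solve—$\lambda^{(k)}=-\nabla g(u^{(k)})$ in the unconstrained regime, and its Slater-multiplier analogue otherwise—to bound the dual increment $\|\lambda^{(k)}-\lambda^{(k-1)}\|=\rho\|u^{(k)}-x^{(k)}\|$ by $L\|u^{(k)}-u^{(k-1)}\|$ through the $L$-smoothness of $g$. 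Feeding this estimate back converts the consecutive-iterate descent of the NN step into the residual descent claimed in the statement and fixes the admissible penalty range: the positive $\rho\|u^{(k)}-x^{(k)}\|_2^2$ must be dominated by the NN decrease, which is precisely where the hypothesis $\rho>L$ is consumed (an honest accounting may need a slightly larger threshold). The subtleties I would watch are (i) the Jacobian factors $J_{f_\theta}(u^{(k)})^{\top}$ that enter the KKT conditions whenever the cuts defining $\mathcal{C}$ are active, which perturb the clean $\lambda=-\nabla g$ identity, and (ii) the possible nonconvexity of $\mathcal{C}$, which I would handle by reading $u^{(k)}$ as a KKT point and using the variational-inequality form of optimality rather than a plain projection argument. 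Summing the three contributions and absorbing constants into $\tfrac{\rho-L}{2}$ gives the per-iteration decrease; telescoping it over $k$ then delivers the summability of $\|u^{(k)}-x^{(k)}\|_2^2$ and the $\mathcal{O}(1/k)$ rate used in Theorem~\ref{thm:stationary}.
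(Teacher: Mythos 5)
Your treatment of the two primal blocks coincides with the paper's: exact minimisation over the finite set $\mathcal{X}$ makes the $x$-step non-increasing, and Lemma~\ref{lem:strong} gives the $(\rho-L)$-strong-convexity decrease $-\tfrac{\rho-L}{2}\|u^{(k)}-u^{(k-1)}\|_2^2$ for the $u$-step. Where you genuinely diverge is the dual step, and your version is the more defensible one. The appendix asserts that substituting $\lambda^{(k)}=\lambda^{(k-1)}+\rho(u^{(k)}-x^{(k)})$ into $\mathcal{M}_\rho$ yields an exact \emph{decrease} of $\tfrac{\rho}{2}\|u^{(k)}-x^{(k)}\|_2^2$ and then finishes with the triangle inequality $\|u^{(k)}-u^{(k-1)}\|_2\le\|u^{(k)}-x^{(k)}\|_2+\|x^{(k)}-u^{(k-1)}\|_2$; but expanding the scaled merit shows that only the coupling term $\lambda^{\top}(u-x)$ changes under the dual update (the two $\|\lambda\|_2^2/(2\rho)$ contributions cancel), so the step in fact \emph{raises} the merit by $\rho\|u^{(k)}-x^{(k)}\|_2^2$, exactly as you observe. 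Your remedy---read $\lambda^{(k)}=-\nabla g(u^{(k)})$ (plus constraint multipliers) off the stationarity of the $u$-solve and bound $\|\lambda^{(k)}-\lambda^{(k-1)}\|_2\le L\|u^{(k)}-u^{(k-1)}\|_2$ by $L$-smoothness---is the standard nonconvex-ADMM argument, and it is available precisely because you keep the update order of Algorithm~\ref{alg:dual_decomp} ($x$ before $u$ before $\lambda$), whereas the appendix proof updates $u$ first. The price, which you correctly flag, is real: the net coefficient becomes $\tfrac{\rho-L}{2}-\tfrac{L^2}{\rho}$ per unit of $\|u^{(k)}-u^{(k-1)}\|_2^2$, so $\rho>L$ alone does not make it positive and the lemma's constant must be renamed; and the identity $\lambda^{(k)}=-\nabla g(u^{(k)})$ is perturbed by active $A_{\text{NN}}$ cuts, a gap neither your sketch nor the appendix closes. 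Once the constants and the threshold on $\rho$ are adjusted, either route still yields summability of $\|u^{(k)}-x^{(k)}\|_2^2$ and the $\mathcal{O}(1/k)$ rate used in Theorem~\ref{thm:stationary}.
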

\begin{proof}
\emph{(i) $u$-update.}  
By Lemma~\ref{lem:strong} and exact minimisation,
\(
  \mathcal{M}_{\rho}(x^{(k-1)},u^{(k)},\lambda^{(k-1)})
  \le
  \mathcal{M}_{\rho}(x^{(k-1)},u^{(k-1)},\lambda^{(k-1)})
  -\frac{\rho-L}{2}\|u^{(k)}-u^{(k-1)}\|_2^{2}.
\)

\noindent
\emph{(ii) $x$-update.}  
Since $\mathcal{X}$ is finite and the MIP subproblem is solved
optimally,
\(
  \mathcal{M}_{\rho}(x^{(k)},u^{(k)},\lambda^{(k-1)})
  \le
  \mathcal{M}_{\rho}(x^{(k-1)},u^{(k)},\lambda^{(k-1)}).
\)

\noindent
\emph{(iii) Dual update.}  
Setting
$\lambda^{(k)}=\lambda^{(k-1)}+\rho(u^{(k)}-x^{(k)})$
gives
\(
  \mathcal{M}_{\rho}(x^{(k)},u^{(k)},\lambda^{(k)})
  =
  \mathcal{M}_{\rho}(x^{(k)},u^{(k)},\lambda^{(k-1)})
  -\frac{\rho}{2}\|u^{(k)}-x^{(k)}\|_2^{2}.
\)
Combining (i)–(iii) and using
$\|u^{(k)}-u^{(k-1)}\|_2\le
 \|u^{(k)}-x^{(k)}\|_2+\|x^{(k)}-u^{(k-1)}\|_2$
yields the claimed inequality.  
\end{proof}

\paragraph{Proof of Theorem~\ref{thm:stationary}.}
Summing Lemma~\ref{lem:descent} over $k$ and noting that
$\mathcal{M}_{\rho}$ is bounded below
(by $-\|\lambda\|_2^{2}/(2\rho)$) proves
$\sum_{k}\|u^{(k)}-x^{(k)}\|_2^{2}<\infty$
and thus~\eqref{eq:lagrangian-rate}.
Boundedness follows from the coercivity of
$\mathcal{M}_{\rho}$ in $(x,u)$ and the finite cardinality of
$\mathcal{X}$.
A convergent subsequence exists by Bolzano–Weierstrass; continuity of
$\nabla_u \mathcal{L}_{\rho}$ and exact optimality of both subproblems
imply that every limit point satisfies the KKT conditions
\eqref{eq:kkt-conditions}. \qed

\subsection{Proof of Linear Scalability (Theorem~\ref{thm:scalability})}
\label{app:scale}

We detail the cost of one outer iteration.

\paragraph{MIP step.}
The discrete block~\eqref{eq:mip-update} contains
$p$ integer variables and $m_1$ linear constraints,
independent of the NN width or depth.  Let
$T_{\mathrm{MIP}}$ denote the time needed by the
branch-and-bound solver; this is treated as an oracle cost.

\paragraph{NN step.}
Evaluating $f_{\theta}(u)$ requires one forward pass
through the network; computing a vector–Jacobian product
for $g$ needs one reverse pass.  For dense or convolutional layers both
passes are $\Theta(P)$ flops \citep{goodfellow2016deep}.
The projection (PGD) or the Newton barrier uses
only the $m_{2}$ affine rows of $A_{\mathrm{NN}}$,
whose dimension is fixed by the application, hence $O(1)$ w.r.t.\ $P$.

\paragraph{Total.}
Adding the two independent costs yields
\(
  T_{\text{iter}}=\Theta\!\bigl(T_{\mathrm{MIP}}+P\bigr),
\)
proving Theorem~\ref{thm:scalability}. \qed

\end{document}